\newcommand{\rrvert}{\vert}
\newcommand{\llvert}{\vert}
\newtheorem{thm}{Theorem}
\theoremstyle{definition}
\begin{document}
\begin{frontmatter}

\title{Distance between exact and approximate distributions of partial
maxima under power normalization}

\author{\inits{A.S.}\fnm{Attahalli Shivanarayanaprasad}\snm{Praveena}\corref{cor1}\fnref{f1}}\email
{praveenavivek@gmail.com}
\cortext[cor1]{Corresponding author.}
\fntext[f1]{Research work supported by Post Doctoral Fellowship for
Women of the University Grants Commission (UGC), New Delhi. Ref:
No.F. 15-1/2011-12/PDFWM-2011-12-GE-KAR-2333(SA-II) dated 1 Nov 2013.}
\author{\inits{S.}\fnm{Sreenivasan}\snm{Ravi}\fnref{f2}}\email
{ravi@statistics.uni-mysore.ac.in}
\fntext[f2]{Research work supported by UGC Major Research Project F.No.
43-541/2014(SR) dated 16.10.2015}

%
%

\address{Department of Studies in Statistics,
University of Mysore,
Manasagangotri, Mysuru~570006, India}



\markboth{A.S. Praveena, S. Ravi}{Distance between exact and
approximate distributions of partial maxima under power normalization}

\begin{abstract}
We obtain the distance between the exact and approximate distributions
of partial maxima of a random sample under power normalization. It is
observed that the Hellinger distance and variational distance between
the exact and approximate distributions of partial maxima under power
normalization is the same as the corresponding distances under linear
normalization.
\end{abstract}

\begin{keyword} Generalized log-Pareto distribution\sep Hellinger
distance\sep von-Mises type parameterization\sep variational distance
\MSC[2010] Primary 60G70\sep Secondary 60E05
\end{keyword}


%
\received{7 October 2015}
%
\revised{4 December 2015}
%
\accepted{4 December 2015}
\publishedonline{23 December 2015}
\end{frontmatter}

\section{Introduction}
Let $X_1,X_2,\dots, X_n $ be independent and identically distributed (iid)
random variables with common distribution function (df) $\;F\;$ and
$M_n=\max(X_1,X_2,\dots, X_n)$, $n\geq1$.
Then $F$ is said to belong to the max domain of attraction of a
nondegenerate df $H$ under power normalization
(denoted by $F\in\mathcal{D}_p (H)$) if, for $n\ge1$, there exist constants $\alpha
_n > 0$, $\beta_n > 0$, such that
\begin{equation}
\label{Intro1} \lim_{n\rightarrow\infty}\; P \biggl(\biggl\llvert
\frac{M_n}{\alpha_n}\biggr\rrvert ^{\frac{1}{\beta_n}}\; \mbox{sign}(M_n) \le
x \biggr)= H(x), \quad x\in\mathcal C(H),
\end{equation}
the set of continuity points of $H$, where sign$(x)=-1, 0$, or $1$
according as $x<0$, $=0$, or $>0$. The limit df $H$ in (\ref{Intro1}) is
called a {\sl p}-max stable law, and we refer to \cite{mohan1993max}
for details.\medskip

\noindent{\bf The {\sl \textbf{p}}-max stable laws.} Two dfs $F$ and $G$ are said to be
of the same {\sl p}-type if $F(x) = G(A\mid x\mid^{B} \mbox
{sign}(x)), x \in R$, for some positive constants $A, B$. The {\sl
p}-max stable laws are {\sl p}-types of one of the following six laws
with parameter \(\alpha>0\):
\begin{align*}
H_{1,\alpha}(x)&= \left\{ %
\begin{array}{@{}ll}
0 & \text{if\ $x\le1$},\\
\exp\{-(\log x)^{-\alpha}\} & \text{if\ $1<x$};
\end{array} %
\right.
\\
H_{2,\alpha}(x)&= \left\{ %
\begin{array}{@{}ll}
0 & \text{if\ $x<0$},\\
\exp\{-(-\log x)^\alpha\} & \text{if\ $0\le x<1$},\\
1 & \text{if\ $1\le x$};
\end{array} %
\right.
\\
H_3(x)&= \left\{ %
\begin{array}{@{}ll}
0 & \text{if\ $x\leq0$},\\
e^{-\frac{1}{x}} & \text{if\ $0<x$};
\end{array} %
\right.
\\
H_{4,\alpha}(x)&= \left\{ %
\begin{array}{@{}ll}
0 & \text{if\ $x\le-1$},\\
\exp\{-(-\log(-x))^{-\alpha}\} & \text{if\ $-1 < x<0$},\\
1 & \text{if\ $0\le x$};
\end{array} %
\right.
\\
H_{5,\alpha}(x) &= \left\{ %
\begin{array}{@{}ll}
\exp\{-(\log(-x))^\alpha\} & \text{if\ $x<-1$},\\
1 & \text{if\ $-1 \le x$};
\end{array} %
\right.
\\
H_6(x)&= \left\{ %
\begin{array}{@{}ll}
e^{x}& \text{if\ $x\leq0$},\\
1 & \text{if\ $0< x$}.
\end{array} %
\right.
\end{align*}
Note that $H_{2,1}(\cdot)$ is the uniform distribution over $(0,1)$.
Necessary and sufficient conditions for a df $F$ to belong to $\mathcal
{D}_p(H)$ for each
of the six {\sl p}-types of {\sl p}-max stable laws were given in \cite
{mohan1993max} (see also \cite{falk2010laws}).

As in \cite{Praveena}, we define the generalized log-Pareto
distribution (glogPd) as $W(x)=1+\log H(x)$ for $x$ with $1/e\leq
H(x)\leq1$, where $H$ is a {\sl p}-max stable law, and the distribution
functions \(W\) are given by
\begin{align*}
W_{1,\alpha}(x) &= \left\{ %
\begin{array}{@{}ll}
0 & \text{if\ $x<e$},\\
1-(\log x)^{-\alpha} & \text{if\ $e\leq x$};\\
\end{array} %
\right.
\\
W_{2,\alpha}(x) &= \left\{ %
\begin{array}{@{}ll}
0 & \text{if\ $x<e^{-1}$},\\
1-(-\log x)^{\alpha} & \text{if\ $e^{-1} \leq x<1$},\\
1 &\text{if\ $1<x$};
\end{array} %
\right.
\\
W_3(x) &= \left\{ %
\begin{array}{@{}ll}
0 & \text{if\ $ x\leq1 $},\\
1-\frac{1}{x} & \text{if\ $1<x$};
\end{array} %
\right.
\\
W_{4,\alpha}(x) &= \left\{ %
\begin{array}{@{}ll}
0 &\text{if\ $x<-e^{-1}$},\\
1-(-\log(-x))^{-\alpha} & \text{if\ $-e^{-1}\leq x<0$},\\
1 & \text{if\ $0<x$};
\end{array} %
\right.
\\
W_{5,\alpha}(x) &= \left\{ %
\begin{array}{@{}ll}
0 & \text{if\ $x<-e$},\\
1-(\log(-x))^{\alpha} & \text{if\ $-e\leq x<-1$},\\
1 & \text{if\ $-1<x$};
\end{array} %
\right.
\\
W_6(x) &=  \left\{ %
\begin{array}{@{}ll}
0 & \text{if\ $x<-1$},\\
1+x & \text{if\ $-1\leq x\leq0$},\\
1 &\text{if\ $0<x$};
\end{array} %
\right.
\end{align*}
and the respective probability density functions (pdfs)
are the following:
\begin{align*}
w_{1,\alpha}(x) &= \frac{\alpha}{x}(\log x)^{-(\alpha+1)},\quad x\geq e;
\\
w_{2,\alpha}(x) &= \frac{\alpha}{x}(-\log x)^{(\alpha-1)},\quad
e^{-1} \leq x<1;
\\
w_3(x) &= \frac{1}{x^2}, \quad x>1;
\\
w_{4,\alpha}(x) &= \frac{-\alpha}{x}\bigl(-\log(-x)\bigr)^{-(\alpha+1)},
\quad -e^{-1} \leq x<0;
\\
w_{5,\alpha}(x) &= \frac{-\alpha}{x}\bigl(\log(-x)\bigr)^{(\alpha-1)},
\quad -e\leq x<-1;
\\
w_6(x) &= 1, \quad-1\leq x\leq0;
\end{align*}
where the pdfs are equal to 0 for the remaining values of $x$.

See also \cite{cormann2009generalizing} and \cite{ravi2011mises} for
more details on generalized log-Pareto distributions. The von-Mises
type sufficient conditions for {\sl p}-max stable laws were obtained in
\cite{mohan1998convergence}.\medskip

\noindent{\bf Von Mises-type parameterization of generalized log-Pareto
distributions.}
The von Mises-type parameterization for generalized log-Pareto
distributions is given by
\begin{align*}
V_1(x)&=1- \{1+\gamma\log x \}^{-1/\gamma},\\
&\quad x>0,(1+\gamma \log
x)>0, \mbox{ whenever } \gamma\geq0, \mbox{ and}\\
V_2(x)&=1- \bigl\{1-\gamma\log(-x) \bigr\}^{-1/\gamma},\\
&\quad x<0,
\bigl(1-\gamma \log(-x)\bigr)>0, \mbox{ whenever } \gamma\leq0,
\end{align*}
where the case $\gamma=0$ is interpreted as the limit as $\gamma
\rightarrow0$. Let $v_1$ and $v_2$ denote the densities of $V_1$ and
$V_2$, respectively. The dfs of generalized log-Pareto distributions
can be regained from $V_1$ and $V_2$ by the following identities:
\begin{align*}
W_{1,1/\gamma} (x)&= \left\{ %
\begin{array}{@{}ll}
0 & \text{if\ $x<e$},\\
V_1 (e^{-1/\gamma} x^{1/\gamma} ) & \text{if\ $e\leq x$},
\gamma>0;\\
\end{array} %
\right.
\\
W_{2,-1/\gamma}(x) &= \left\{ %
\begin{array}{@{}ll}
0 & \text{if\ $x<e^{-1}$},\\
V_1 (e^{-1/\gamma} x^{-1/\gamma} ) & \text{if\ $e^{-1}
\leq x<1$},\\
1 &\text{if\ $1<x$}, \gamma>0;
\end{array} %
\right.
\\
W_{4,1/\gamma}(x) &= \left\{ %
\begin{array}{@{}ll}
0 &\text{if\ $x<-e^{-1}$},\\
V_2 (-e^{1/\gamma} (-x)^{1/\gamma} ) & \text{if\ $-e^{-1}\leq x<0$},\\
1 & \text{if\ $0<x$}, \gamma<0;
\end{array} %
\right.
\\
W_{5,-1/\gamma}(x)&= \left\{ %
\begin{array}{@{}ll}
0 & \text{if\ $x<-e$},\\
V_2 (-e^{1/\gamma} (-x)^{1/\gamma} ) & \text{if\ $-e\leq
x<-1$},\\
1 & \text{if\ $-1<x$}, \gamma<0.
\end{array} %
\right.
\end{align*}
Note that $\lim_{\gamma\rightarrow0}V_1(x)=W_3(x),
x>1$, and $\lim_{\gamma\rightarrow0}V_2(x)=W_6(x), x\in
[-1,0]$.\medskip

\noindent{\bf Graphical representation of generalized log-Pareto pdfs.}
\begin{figure}[t!]
\includegraphics{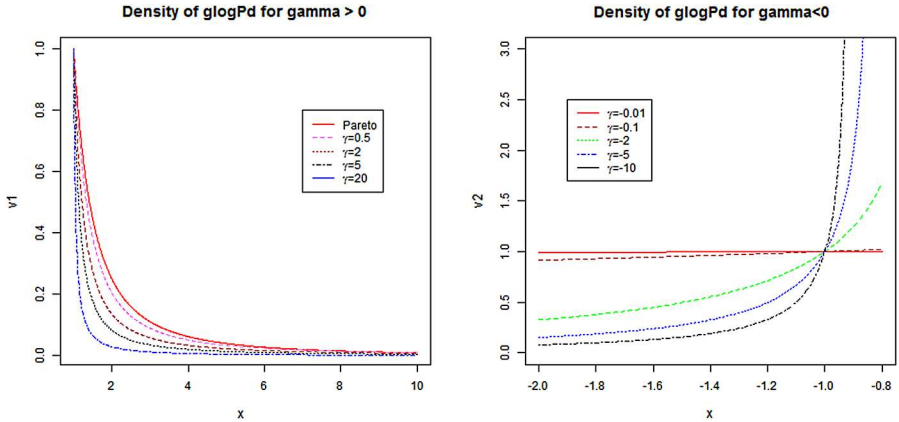}
\caption{Densities of glogPds}\label{1}
\end{figure}
In Fig. \ref{1}, observe that the pdfs $v_1$ approach the standard Pareto pdf as $\gamma
\downarrow0$, and the pdfs $v_2$ approach the standard uniform pdf as
$\gamma\uparrow0$.

The Hellinger distance, also called the Bhattacharya distance, is used
to quantify the similarity between two probability distributions, and
this was defined in terms of the Hellinger integral introduced by \cite
{hellinger1909neue}.
In view of statistical applications, the distance between the exact and
the limiting distributions is measured using the Hellinger distance.
Inference procedures based on the Hellinger distance provide
alternatives to likelihood-based methods. The minimum Hellinger
distance estimation with inlier modification was studied in~\cite
{patra2008minimum}. In \cite{reiss1989approximate}, the weak
convergence of distributions of extreme order statistics (defined later
in Section 2) was examined.

In the next section, we study the variational distance between the
exact and asymptotic distributions of power normalized partial maxima
of a random sample and the Hellinger distance between these. The
results obtained here are similar to those in \cite{reiss1989approximate}.

\section{Hellinger and variational distances for sample maxima}

\noindent We recall a few definitions for convenience.\medskip

\noindent{\bf Weak domain of attraction.} If a df $F$ satisfies (\ref
{Intro1}) for some norming constants and nondegenerate df $H$, then $F$
is said to belong to the weak domain of attraction of $H$.\medskip

\noindent{\bf Strong domain of attraction.} A df $F$ is said to belong
to the strong domain of attraction of a nondegenerate df $H$
if\vadjust{\eject}
\[
\lim_{n\rightarrow\infty}\sup_{B}\biggl\llvert P \biggl(
\biggl\llvert \frac
{X_{n:n}}{\alpha_n}\biggr\rrvert ^{1/\beta_n}\mbox{sign}(X_{n:n})
\in B \biggr)-H(B)\biggr\rrvert = 0, \;
\]
where sup is taken over all Borel sets $B$ on $R$.\medskip

\noindent{\bf Limit law for the $k$th largest order statistic} \cite
{mohan1993max}. Let $X_{1:n} \leq\cdots\leq X_{n:n}$ denote the
order statistics from a random sample $X_1, \ldots, X_n$, and for
$i = 1, \ldots, 6$, let
\[
\lim_{n\rightarrow\infty} P \biggl(\biggl\llvert \frac{X_{n:n}}{\alpha_n}\biggr
\rrvert ^{^{1/\beta_n}}\mbox{sign}(X_{n:n})\leq x \biggr)=
H_{i,\alpha}(x).
\]
Then it is well known that, for integer $k \geq1$,
\begin{align}
\label{kth}
\nonumber
\lim_{n\rightarrow\infty}P \biggl(\biggl\llvert
\frac{X_{n-k+1:n}}{\alpha_n}\biggr\rrvert ^{^{1/\beta_n}}\mbox {sign}(X_{n-k+1:n})\leq
x \biggr)&= H_{i,\alpha}(x)\sum_{j=0}^{k-1}
\frac
{  (-\log H_{i,\alpha}(x) )^j}{j!}
\\
&= H_{i,\alpha,k}(x),\quad \mbox{say.}
\end{align}

\noindent{\bf Hellinger distance} \cite{reiss1989approximate}.
Given dfs $F$ and $G$ with Lebesgue densities $f$ and~$g$, the
Hellinger distance between
$F$ and $G$, denoted $H^*(F,G)$, is defined as
\begin{equation}
\label{3.1} H^*(F,G)= \Biggl(\int_{-\infty}^{\infty}
\bigl(f^{1/2}(x)-g^{1/2}(x) \bigr)^2dx
\Biggr)^{1/2}.
\end{equation}
The results in this section will be proved for the $p$-max stable law
$H_{2,1}(\cdot)$,
and the other cases can be deduced by using the transformation
$T(x)=T_{i,\alpha}(x)$ given by
$T_{i,\alpha}(x)=H^{-1}_{i,\alpha}(x)\circ H_{2,1}(x)=H^{-1}_{i,\alpha
}(x), x\in(0,1)$ with
$T_{1,\alpha}(x)=\exp ((-\log x)^{-1/\alpha} )$,
$T_{2,\alpha}(x)=\exp (-(-\log x)^{1/\alpha} ),\;T_3(x)=-\frac
{1}{\log x},\;$
$T_{4,\alpha}(x)=-\exp (-\log x)^{-1/\alpha} ),$
$T_{5,\alpha}(x)=-\exp ((-\log x)^{1/\alpha} )$, and
$T_6(x)=\log x$.

We assume that the underlying pdf $f$ is of the form
$f(x)=w(x)e^{g(x)}$ where $g(x)\rightarrow0$ as $x\rightarrow r(H)
=\sup\{x:H(x)<1\}$, the right extremity of~$H$.
Equivalently, we may use the representation $f(x)=w(x)(1+g^*(x))$ by
writing $f(x)=w(x)e^{g(x)}=w(x)(1+(e^{g(x)}-1)),\;g(x)\rightarrow0$ as
$x\rightarrow r(F)$. The following result is on Hellinger distance,
and its proof is similar to that of Theorem 5.2.5 of \cite
{reiss1989approximate} and hence is omitted.

\begin{thm}\label{thm3.1}
Let $H$ be a p-max stable law as in \eqref{Intro1}, and $F$ be an
absolutely continuous df with pdf $f$ such that $f(x)>0$ for
$x_0<x<r(F)$ and $f(x)=0$ otherwise. Assume that $r(F)=r(H)$. Then
\begin{align}
H^*(F^n,H)&\leq \Biggl\{\int_{x_0}^{r(H)} \biggl(
\frac{nf(x)}{w(x)}-1-\log \biggl(\frac
{nf(x)}{w(x)} \biggr)dH(x)\notag\\
&\quad+2H(x_0)-H(x_0)\log H(x_0) \biggr) \Biggr
\}^{1/2}+\frac{c}{n},
\label{2.3}
\end{align}
where $c>0$ is a universal constant.
\end{thm}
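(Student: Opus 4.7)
The plan is to follow the template of Theorem~5.2.5 of \cite{reiss1989approximate}, adapting that linear-normalization argument to the power-norming setting.

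\textbf{Step 1 (reduction to $H_{2,1}$).} The transformations $T_{i,\alpha}$ introduced just before the theorem are increasing bijections from the support $(0,1)$ of $H_{2,1}$ onto the interior of the support of each $H_{i,\alpha}$. Since Hellinger distance is invariant under bimeasurable bijections, and since a change-of-variable computation using the identity $w=H'/H$ (valid on the upper-tail region where the log-Pareto is defined) shows that both the ratio $nf(x)/w(x)$ and the measure $dH(x)$ are invariant under $x\mapsto T_{i,\alpha}^{-1}(x)$, it suffices to prove the theorem in the canonical case $H=H_{2,1}$, where $H'\equiv 1$ on $(0,1)$ and $w(x)=1/x$ on $[e^{-1},1]$.

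\textbf{Step 2 (splitting the Hellinger integral).} The support constraint $f\equiv 0$ on $(-\infty,x_0)$ forces $(F^n)'=nF^{n-1}f$ to vanish there, so
\[
H^*(F^n,H)^2 = H(x_0) + \int_{x_0}^{r(H)}\Bigl(\sqrt{nF^{n-1}(x)f(x)} - \sqrt{H'(x)}\Bigr)^2 dx.
\]

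\textbf{Step 3 (isolating the $nf/w$ term).} On the upper tail $H'=wH$, and I would use the additive decomposition
\[
\sqrt{nF^{n-1}f} - \sqrt{H'} = \sqrt{H}\bigl(\sqrt{nf}-\sqrt{w}\bigr) + \sqrt{nf}\bigl(\sqrt{F^{n-1}}-\sqrt{H}\bigr),
\]
treating the first summand as the main term and the second as the error. Squaring and integrating the first gives $\int H'(\sqrt{nf/w}-1)^2\,dx$, which by the elementary inequality $(\sqrt{u}-1)^2 \le u-1-\log u$ (set $v=\sqrt{u}$ and apply $v-1-\log v\ge 0$) is bounded by $\int_{x_0}^{r(H)}\bigl(nf/w - 1 - \log(nf/w)\bigr)dH$. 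The boundary contribution $2H(x_0) - H(x_0)\log H(x_0)$ emerges by combining the $H(x_0)$ from Step~2 with the explicit evaluation $\int_{x_0}^{r(H)}\log H\,dH = -1 + H(x_0) - H(x_0)\log H(x_0)$ and absorbing small remainders through loose upper bounds.

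\textbf{Step 4 (error control; main obstacle).} The second summand squared and integrated is $\int nfH\bigl(\sqrt{F^{n-1}/H}-1\bigr)^2 dx$. A two-term Taylor expansion of $\log F$ around $1$, combined with $f=we^g$ and $g(x)\to 0$ as $x\to r(H)$, yields $F^{n-1}/H = 1 + O((1-H)/n)$ uniformly on $[x_0,r(H))$, so the integrand is $O((\log H)^2/n^2)$; the prefactor $nf\approx w$ absorbs one power of $n$, making this contribution to $H^{*2}(F^n,H)$ of order $1/n^2$. The cross term in $(P+Q)^2 = P^2 + 2PQ + Q^2$ must be treated more carefully than via naive Cauchy--Schwarz (which would only deliver $O(1/n)$); applying $(P+Q)^2 \le (1+\eta)P^2 + (1+1/\eta)Q^2$ with $\eta = O(1/n)$, or equivalently extracting one further factor of $n$ from the second-order Taylor remainder, keeps the cross contribution also at order $1/n^2$. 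Finally, $\sqrt{A+B}\le\sqrt{A}+\sqrt{B}$ converts this $O(1/n^2)$ squared error into the additive $c/n$ outside the square root. The crux of the proof is precisely the handling of that cross term: the structural hypothesis $f=we^g$ with $g\to 0$ at $r(H)$ is used essentially to force the second-order cancellation in $\log F-\log H^{1/n}$ that is needed to obtain $c/n$ rather than $c/\sqrt{n}$.
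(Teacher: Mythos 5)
First, note that the paper does not actually prove this theorem: it states that the proof ``is similar to that of Theorem 5.2.5 of \cite{reiss1989approximate} and hence is omitted,'' so the benchmark is Reiss's argument. That argument does not proceed by directly expanding the Hellinger integral of $F^n$ against $H$; it passes through the point-process machinery, bounding the Hellinger distance between the truncated empirical process of the sample and a Poisson process. The binomial-to-Poisson step is what produces the additive $c/n$ with a genuinely \emph{universal} constant, the Hellinger distance between the two Poisson processes (intensities $nf\,dx$ and $w\,dx$ on $(x_0,r(H))$) produces the integral $\int(nf/w-1-\log(nf/w))$, and the truncation at $x_0$ produces the boundary term $H(x_0)\bigl(1-\log H(x_0)\bigr)=P(N\le 1)$ for $N\sim\mathrm{Poisson}(-\log H(x_0))$. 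Your Steps 1--3 are fine as far as they go: the algebraic decomposition is correct, and $(\sqrt{u}-1)^2\le u-1-\log u$ does recover the main integral. But your route is genuinely different from the omitted one, and it is in Step 4 that it breaks.

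The concrete gap is the claim that $\int_{x_0}^{r(H)} nf\,\bigl(\sqrt{F^{n-1}}-\sqrt{H}\bigr)^2dx=O(1/n^2)$ with a constant independent of $F$. Theorem~\ref{thm3.1} assumes only positivity of $f$ on $(x_0,r(F))$ and $r(F)=r(H)$; no normalization has been applied and no rate is imposed on $g$, so nothing forces $F^{n-1}$ to be close to $H$ (take $f$ concentrated just above $x_0$: the integral is of order $n$, not $n^{-2}$). Any bound you extract by Taylor-expanding $F^{n-1}/H$ necessarily depends on $F$ through $g$, which contradicts the universality of $c$ --- that universality is exactly what the Poisson approximation buys and what a direct expansion cannot. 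Two smaller problems: (i) your cross-term discussion is confused --- $L^2$-Minkowski already gives $\bigl(\int(P+Q)^2\bigr)^{1/2}\le\bigl(\int P^2\bigr)^{1/2}+\bigl(\int Q^2\bigr)^{1/2}$, so the cross term is never the obstacle, and your $(1+\eta)P^2+(1+1/\eta)Q^2$ device with $\eta=O(1/n)$ would in any case only yield $O(n^{-1/2})$ outside the square root; (ii) the boundary term $2H(x_0)-H(x_0)\log H(x_0)$ is asserted to ``emerge'' from $\int_{x_0}^{r(H)}\log H\,dH$ and ``loose upper bounds,'' but no actual derivation connects that integral to your error terms. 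As it stands the proposal does not prove the stated inequality.
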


\begin{thm}\label{2.2}
Suppose that $H$ is a p-max stable law as in \eqref{Intro1}, and $w(x),
T_{i,\alpha}(x)$ be the corresponding auxiliary functions with
$w(x)=h(x)/H(x)$ and $T_{i,\alpha}(x)=H^{-1}_{i,\alpha}(x)$, where $h$
denotes the pdf of $H$. Let the pdf $f$ of the df $F$ have the
representation\vadjust{\eject} $f(x)=w(x)e^{g_i(x)}, T(x_0)<x<r(H_i)$, for some $i$
and $=0 \mbox{ if } x>r(H_i)$, where $0<x_0<1$, and let $g_i$ satisfy
the condition
\begin{align}
\label{g} \big|g_i(x)\big|&\leq %
\begin{cases}
L(\log x)^{-\alpha\delta}& \emph{if}\ i=1,\\
L(-\log x)^{\alpha\delta} & \emph{if}\ i=2,\\
L\frac{1}{x^\delta} & \emph{if}\ i=3,\\
L(-\log(-x))^{-\alpha\delta}& \emph{if}\ i=4,\\
L(\log(-x))^{\alpha\delta} & \emph{if}\ i=5,\\
Lx^\delta& \emph{if}\ i=6,
\end{cases} %
\end{align}
where $L,\delta$ are positive constants. If
$F_n(x)=F(A_n|x|^{B_n}\mbox{\rm sign}(x))$ with
\[
A_n = %
\begin{cases}
1&\emph{if}\ i=1,2,3,4,\\
n&\emph{if}\ i=5,\\
1/n&\emph{if}\ i=6,
\end{cases} %
\quad \emph{and} \quad B_n = %
\begin{cases}
n^{-1/\alpha}&\emph{if}\ i=2,4,\\
n^{1/\alpha}&\emph{if}\ i=1,3,\\
1&\emph{if}\ i=5,6,
\end{cases} %
\]
then
\begin{align*}
H^*\bigl(F^n,H\bigr)&\leq %
\begin{cases}
Dn^{-\delta}& \emph{if}\ 0<\delta\leq1,\\
Dn^{-1} & \emph{if}\ \delta>1,
\end{cases} %
\end{align*}
where $D$ is a constant depending only on $x_0,L$, and $\delta$.
\end{thm}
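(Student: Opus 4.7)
The plan is to reduce the statement to the canonical case $H = H_{2,1}$, the uniform law on $(0,1)$, using the transformations $T_{i,\alpha}$ described after Theorem~\ref{thm3.1}, and then apply Theorem~\ref{thm3.1} to the power-normalized distribution $F_n$.

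First I would use the invariance of Hellinger distance under monotone smooth change of variables. Under the substitution $y = H_{i,\alpha}(x)$ (equivalently $x = T_{i,\alpha}(y)$), the representation $f(x) = w(x) e^{g_i(x)}$ becomes $\tilde f(y) = y^{-1} e^{\tilde g(y)}$, where $\tilde g(y) := g_i(T_{i,\alpha}(y))$, because the Jacobian of $T_{i,\alpha}$ cancels the $h_i(T_{i,\alpha}(y))$ produced by $w = h/H$. A direct check using the explicit form of each $T_{i,\alpha}$ shows that all six right-hand sides in \eqref{g} collapse to the single inequality $|\tilde g(y)| \leq L(-\log y)^{\delta}$ on $(x_0, 1)$: for instance, for $i=1$, $\log T_{1,\alpha}(y) = (-\log y)^{-1/\alpha}$ turns $L(\log x)^{-\alpha\delta}$ into $L(-\log y)^{\delta}$, and analogously for the other five cases. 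I may therefore assume $i = 2$, $\alpha = 1$, $A_n = 1$, $B_n = 1/n$, and $w(x) = 1/x$.

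For $x \in (x_0^n, 1)$ I would then compute
\[
f_n(x) = f(x^{1/n}) \cdot \tfrac{1}{n} x^{1/n-1} = \tfrac{1}{nx}\, e^{g(x^{1/n})}, \qquad \frac{nf_n(x)}{w(x)} = e^{g(x^{1/n})},
\]
and apply Theorem~\ref{thm3.1} to $F_n$ with lower endpoint $x_0^n$ (the left endpoint of the support of $f_n$), obtaining
\[
H^*(F_n^n, H) \leq \left(\int_{x_0^n}^{1}\bigl[e^{g(x^{1/n})} - 1 - g(x^{1/n})\bigr]\, dx + 2x_0^n - x_0^n\log x_0^n\right)^{1/2} + \frac{c}{n}.
\]
The boundary term equals $2x_0^n + n x_0^n |\log x_0|$ and decays geometrically in $n$, hence is dominated by $c/n$.

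For the main integral, the pointwise bound
\[
|g(x^{1/n})| \leq L(-\log x^{1/n})^{\delta} = L n^{-\delta}(-\log x)^{\delta} \leq L |\log x_0|^{\delta}
\]
holds uniformly for $x \in (x_0^n, 1)$, so $e^{|g|}$ is uniformly bounded. Combining this with the Taylor estimate $e^t - 1 - t \leq \tfrac{1}{2} t^2 e^{|t|}$ and the standard identity $\int_0^1 (-\log x)^{2\delta}\, dx = \Gamma(2\delta + 1)$ yields
\[
\int_{x_0^n}^{1}\bigl[e^{g(x^{1/n})} - 1 - g(x^{1/n})\bigr]\, dx \leq C\, n^{-2\delta},
\]
with $C$ depending only on $x_0$, $L$, and $\delta$. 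Taking square roots and adding $c/n$ gives $H^*(F_n^n, H) \leq D_0\, n^{-\delta} + c/n$, from which the stated dichotomy follows: the $n^{-\delta}$ term dominates when $\delta \leq 1$, and the $n^{-1}$ term takes over when $\delta > 1$. The principal obstacle I anticipate is the case-by-case bookkeeping in the reduction step: one must verify for each $i \in \{1,\dots,6\}$ that the Jacobian of $T_{i,\alpha}$ together with $w = h/H$ produces exactly $y^{-1}$ in the transformed density, and that the six exponents in \eqref{g} all reduce to $(-\log y)^{\delta}$; everything after that is a Taylor expansion and a Gamma integral.
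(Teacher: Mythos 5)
Your proposal is correct and follows essentially the same route as the paper: reduce to $H=H_{2,1}$ via the transformations $T_{i,\alpha}$, apply Theorem~\ref{thm3.1} with $x_{0,n}=x_0^n$, discard the geometrically small boundary term, and bound $\int(e^{g(x^{1/n})}-1-g(x^{1/n}))\,dx$ by a Taylor estimate and the Gamma integral $\int_0^1(-\log x)^{2\delta}dx=\varGamma(2\delta+1)$. Your computation of $nf_n(x)/w_{2,1}(x)=e^{g(x^{1/n})}$ with the full Jacobian factor $\frac{1}{n}x^{1/n-1}$ is in fact slightly more careful than the paper's abbreviated $f_n(x)=f(x^{1/n})/n$, but the argument and conclusion are the same.
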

\begin{proof}
Without loss of generality, we may assume that $H=H_{2,1}$. The other
cases can be deduced by using the transformation $T(x)=T_{i,\alpha
}(x)$. We apply Theorem \ref{thm3.1} with $x_{0,n}=x_0^n$, $\frac
{1}{2}<x_0<1$. Note that the term $2H_{2,1}(x_0^n)-H_{2,1}(x_0^n)\log
H_{2,1}(x_0^n)=x_0^n-x_0^n\log x_0^n$ can be neglected. Putting
$f_n(x)=f(x^{1/n})/n$, since $g$ is bounded on $(x_0,1)$, we have from
(\ref{2.3})
\[
H^*\bigl(F^n,H\bigr)\leq \Biggl\{\int_{x_0^n}^{1}
\biggl(\frac
{nf_n(x)}{w_{2,1}(x)}-1-\log \biggl(\frac{nf_n(x)}{w_{2,1}(x)} \biggr)
\biggr)dH_{2,1}(x) \Biggr\}^{1/2}+\dfrac{c}{n}.
\]
Then
\begin{align*}
&\int_{x_0^n}^{1} \left(\frac{nf_n(x)}{w_{2,1}(x)}-1-\log
 \left(\frac{nf_n(x)}{w_{2,1}(x)} \right) \right)dH_{2,1}(x)\\
&\quad= \int_{x_0^n}^{1} \biggl(\frac{f(x^{1/n})}{w_{2,1}(x)}-1-\log
\biggl(\frac
{nf(x^{1/n})}{w_{2,1}(x)} \biggr) \biggr)dH_{2,1}(x)
\\
&\quad=  \int_{x_0^n}^{1} \bigl(e^{h(x^{1/n})}-1-h
\bigl(x^{1/n}\bigr) \bigr)dx
\\
&\quad\leq \biggl(\frac{1}{2}+\frac{L(-\log x_0^{1/n})}{3!}+\cdots \biggr)\int
_{x_0^n}^{1} \bigl(h\bigl(x^{1/n}\bigr)
\bigr)^2dx
\\
&\quad\leq D^* \int_0^{1} \bigl(h
\bigl(x^{1/n}\bigr) \bigr)^2dx
\\
&\quad\leq D^*\frac{L^2}{2}\int_0^{1} \bigl(-
\log x^{1/n} \bigr)^{2\delta
}dx\\
&\quad= D^*\frac{L^2}{2}n^{-2\delta}\int_0^{\infty}e^{-y}y^{2\delta}dy
\\
&\quad= D^* \frac{L^2}{2}n^{-2\delta}\varGamma(2\delta+1),
\end{align*}
and $H^*(F_n^n,H(x))\leq (D^*\frac{L^2}{2}n^{-2\delta
}\varGamma(2\delta+1) )^{1/2}+\frac{c}{n}= (\frac{D^*}{2}
)^{1/2}Ln^{-\delta}(\varGamma(2\delta+1))^{1/2}+cn^{-1}$.
Hence,
\begin{align*}
H^*\bigl(F^n,H\bigr)&\leq %
\begin{cases}
Dn^{-\delta}& \text{if}\ 0<\delta\leq1,\\
Dn^{-1} & \text{if}\ \delta>1,
\end{cases} %
\end{align*}
where $D= (\frac{D^*}{2} )^{1/2}L\sqrt{\varGamma(2\delta+1)}$ is
a constant depending only on $x_0,\; L$, and $\delta$, and $\varGamma$ is
the gamma function.
\end{proof}

Theorem \ref{3.4} below gives the variational distance between exact
and approximate distributions of power normalized partial maxima. To
prove the result, we use the next result, the proof of which is similar
to that of Theorem 5.5.4 of \cite{reiss1989approximate} and hence is omitted.

\begin{thm}\label{3.3}
Let $H_j, j = 1, \ldots, 6$, denote the p-max stable laws as in \eqref
{Intro1}, and $H = H_{j,\alpha,k}$ denote the limit laws of the power
normalized $k$th largest order statistic as in \eqref{kth}. Let $F$ be
an absolutely continuous df with pdf $f$ such that $f(x)>0$ for
$x_0<x<r(F)$. Let $r(F)=r(H)$ and $w(x)=h(x)/H(x)$ on the support of
$H$, where $h$ is the pdf of $H$. Then
\begin{align*}
&\sup_B\big|P \big(\big(X_{n:n},\dots,X_{n-k+1:n}\big)\in B\big)-H_k(B)\big| \\
&\quad\leq\bigg(\sum_{j=1}^{k}\int_{x_0}^{r(H)} \left(\frac{nf(x)}{w(x)}-1-\log
 \left(\frac{nf(x)}{w(x)} \right) \right)dH_j(x) +H_k(x_0)+kH_{k-1}(x_0)\\
 &\qquad+\sum_{j=1}^{k-1}\int_{x_j>x_0,x_k<x_0}\log \left(\frac{nf(x_j)}{w(x_j)} \right)dH_k(x) \bigg)^{1/2}+\frac{ck}{n}.
\end{align*}
\end{thm}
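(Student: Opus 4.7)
The plan is to adapt the proof of Theorem 5.5.4 in \cite{reiss1989approximate}, which treats linear normalization, to the power-normalization setting. First, one reduces to the case $H=H_{2,1}$ (uniform on $(0,1)$) by applying the strictly monotone bijection $T=T_{i,\alpha}$ coordinatewise to the order-statistic vector, exactly as in the proof of Theorem \ref{2.2}; the variational distance is invariant under this transformation. In the reduced setting, $H_{2,1}(x)=x$ on $(0,1)$ and $w(x)=h_{2,1}(x)/H_{2,1}(x)=1/x$, and the marginal distribution of the $j$-th maximum in the limit is $H_j(x)=x\sum_{i=0}^{j-1}(-\log x)^i/i!$ from \eqref{kth}.

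Next, compare two joint densities. The exact joint density of $(X_{n:n},\dots,X_{n-k+1:n})$ is
\[
p_n(x_1,\dots,x_k)=\frac{n!}{(n-k)!}\,F(x_k)^{n-k}\prod_{j=1}^{k}f(x_j),\qquad x_k<\cdots<x_1<r(F),
\]
while the joint density of the limit $H_k$ factorises as $q(x_1,\dots,x_k)=H(x_k)\prod_{j=1}^{k} w(x_j)$. After the substitution $x\mapsto x^{1/n}$ used in Theorem \ref{2.2}, the ratio $p_n/q$ becomes a product of per-coordinate factors $nf(x_j^{1/n})/w(x_j)$ times a correction $F(x_k^{1/n})^{n-k}/H(x_k)$ which tends to $1$ uniformly on $\{x_k\geq x_0\}$ at rate $O(k/n)$; this correction is the source of the additive $ck/n$ term. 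I would then apply the standard $\phi$-divergence bound for the variational distance with $\phi(t)=t-1-\log t$, namely $d_V(p_n,q)\leq (\int q\,\phi(p_n/q)\,dx)^{1/2}$, and decompose $\log(p_n/q)$ into per-coordinate contributions $\log(nf(x_j)/w(x_j))$. Integrating against the product form of $q$ makes the marginals $H_j$ emerge, so that the bulk term reduces to $\sum_{j=1}^{k}\int_{x_0}^{r(H)}\phi(nf/w)\,dH_j(x)$.

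To obtain the boundary terms, I would split the domain into the good region $\{x_k>x_0\}$ and its complement. On the complement, $q$ contributes $H_k(x_0)$ directly, while a tail-exchange with the exact density accounts for the $kH_{k-1}(x_0)$ correction. Configurations with some $x_j>x_0$ but $x_k<x_0$ cannot be fully decoupled from the factorisation above, and the surviving pieces yield precisely the mixed integral $\sum_{j=1}^{k-1}\int_{x_j>x_0,x_k<x_0}\log(nf(x_j)/w(x_j))\,dH_k(x)$. The main obstacle is this boundary bookkeeping: organising, coordinate by coordinate, the contributions from $\{x_k<x_0\}$ into (i) the $H_k(x_0)+kH_{k-1}(x_0)$ piece, (ii) the mixed log-integrals, and (iii) the $O(k/n)$ remainder produced by approximating $F(x^{1/n})^{n-k}/H(x)$ by $1$, so that everything assembles into the stated upper bound. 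Once the $\phi$-divergence decomposition is in place, the remaining estimates are routine Taylor expansions analogous to those in the proof of Theorem \ref{2.2}.
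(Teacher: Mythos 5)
The paper gives no proof of Theorem \ref{3.3}, stating only that it is similar to that of Theorem 5.5.4 of \cite{reiss1989approximate}, and your sketch follows precisely that route: reduction to $H_{2,1}$ via $T_{i,\alpha}$, comparison of the exact joint density $\tfrac{n!}{(n-k)!}F(x_k)^{n-k}\prod_{j}f(x_j)$ with the limiting density $H(x_k)\prod_{j}w(x_j)$, the variational-distance bound through $\phi(t)=t-1-\log t$, and separate bookkeeping on $\{x_k<x_0\}$. This is essentially the intended argument; the only slip is that the substitution $x\mapsto x^{1/n}$ you invoke belongs to the application in Theorem \ref{3.4}, not to Theorem \ref{3.3} itself, whose bound is stated directly in terms of $nf/w$ for a fixed $F$.
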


\begin{thm} \label{3.4}
Let $H_j, j = 1, \ldots, 6$, denote the p-max stable laws as in \eqref
{Intro1} and $w(x), T_{i,\alpha}$ be the corresponding auxiliary
functions with $w(x)=h(x)/H(x)$ and $T_{i,\alpha}(x)=H_{i,\alpha
}^{-1}(x)$. Let the pdf $f$ of the absolutely continuous df $F$ satisfy
the representation $f(x)=w(x)e^{g_i(x)},\;T(x_0)<x<r(H)$, for some $i$
and $=0$ if $x>r(H)$, where $1/2<x_0<1$, and $g_i$ satisfy the
condition given in~\eqref{g}. Then
\begin{align*}
&\sup_B\bigg\llvert P \bigg\{ \bigg(\bigg\llvert \frac
{X_{n-j+1:n}}{A_n}\bigg\rrvert ^{1/B_n}\operatorname{sign}(X_{n-j+1:n}) \bigg)_{j=1}^k\in
B \bigg\}-H_k(B)\bigg\rrvert\\
&\quad\leq D\big((k/n)^\delta k^{1/2}+k/n\big),
\end{align*}
where $D$ is a constant depending on $x_0, L$, and $\delta$, and $A_n$
and $B_n$ are defined in Theorem \ref{2.2}.
\end{thm}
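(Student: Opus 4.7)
The plan is to follow the blueprint of Theorem~\ref{2.2} but with the multivariate bound of Theorem~\ref{3.3} in place of Theorem~\ref{thm3.1}. It suffices to establish the inequality in the standard case $H=H_{2,1}$ (uniform on $(0,1)$) and then push the resulting bound through the transformation $T=T_{i,\alpha}$ to recover the remaining five cases, exactly as in the proof of Theorem~\ref{2.2}. With this reduction, I will set $x_{0,n}=x_0^n$ for a fixed $1/2<x_0<1$, $f_n(x)=f(x^{1/n})/n$, and invoke Theorem~\ref{3.3} with $x_{0,n}$ in place of $x_0$ and $H_j=H_{2,1,j}$. The bound produced by Theorem~\ref{3.3} is then the square root of three ingredients plus an additive $ck/n$: (i) the main sum $\sum_{j=1}^{k}I_{n,j}$ with
\begin{equation*}
I_{n,j}=\int_{x_0^n}^{1}\biggl(\frac{nf_n(x)}{w_{2,1}(x)}-1-\log\frac{nf_n(x)}{w_{2,1}(x)}\biggr)\,dH_{2,1,j}(x);
\end{equation*}
(ii) the boundary contributions $H_{2,1,k}(x_0^n)+kH_{2,1,k-1}(x_0^n)$; and (iii) a logarithmic cross-term supported on $\{x_j>x_0^n,\,x_k<x_0^n\}$.

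For (i), the computation from the proof of Theorem~\ref{2.2} already reduces the integrand of $I_{n,j}$ to $e^{g_2(x^{1/n})}-1-g_2(x^{1/n})\leq Cg_2(x^{1/n})^2$ for $x$ near $1$, and by \eqref{g} with $i=2,\alpha=1$ this is bounded by $CL^2 n^{-2\delta}(-\log x)^{2\delta}$. The new ingredient compared with Theorem~\ref{2.2} is the integration against the density $h_{2,1,j}(x)=(-\log x)^{j-1}/(j-1)!$ of $H_{2,1,j}$ on $(0,1)$; the substitution $y=-\log x$ gives
\begin{equation*}
\int_{0}^{1}(-\log x)^{2\delta}\,dH_{2,1,j}(x)=\frac{\Gamma(2\delta+j)}{\Gamma(j)}\leq C_\delta\,j^{2\delta},
\end{equation*}
so that summing over $j=1,\ldots,k$ yields $\sum_{j=1}^{k}I_{n,j}\leq C'L^2 n^{-2\delta}k^{2\delta+1}$, whose square root is at most $D(k/n)^\delta k^{1/2}$, matching the leading term in the claimed bound.

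It remains to verify that the boundary and cross contributions (ii)--(iii) are absorbed into the $ck/n$ summand. Since $x_0<1$, the identity $H_{2,1,k}(x_0^n)=x_0^n\sum_{\ell=0}^{k-1}(n\log(1/x_0))^\ell/\ell!$ exhibits $H_{2,1,k}(x_0^n)$ as the product of the exponentially small factor $x_0^n$ and a polynomial in $n$ and $k$, hence negligible compared with $k/n$ for $k$ polynomial in $n$; the same holds for $kH_{2,1,k-1}(x_0^n)$. The cross-term (iii) is handled as in Theorem~5.5.4 of \cite{reiss1989approximate}, by bounding the logarithmic integrand by $|g_2(x^{1/n})|+O(\log n)$ on a set whose $H_{2,1,k}$-mass is exponentially small in $n$. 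Combining (i)--(iii) yields the stated bound $D((k/n)^\delta k^{1/2}+k/n)$. The main obstacle in this plan is the careful accounting of the factor $\Gamma(2\delta+j)/\Gamma(j)\sim j^{2\delta}$ inside the sum over $j$: this dependence is precisely what generates the $k^\delta$ inside $(k/n)^\delta k^{1/2}$, and a naive uniform-in-$j$ bound would lose the multivariate structure and produce the wrong power of $k$.
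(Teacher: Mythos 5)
Your proposal follows essentially the same route as the paper's proof: reduction to $H=H_{2,1}$, application of Theorem \ref{3.3} with $x_{0,n}=x_0^n$, the Taylor bound $e^{g}-1-g\leq Cg^2$ combined with \eqref{g} to get $L^2n^{-2\delta}(-\log x)^{2\delta}$, and the key estimate $\sum_{j=1}^k\Gamma(2\delta+j)/\Gamma(j)\leq C k^{2\delta+1}$ yielding the $(k/n)^\delta k^{1/2}$ term. Your treatment of the boundary and cross-term contributions is in fact slightly more explicit than the paper's, which dismisses them with a one-line remark.
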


\begin{proof}
We prove the result for the particular case $H=H_{2,1}$. Applying
Theorem \ref{3.3} with $x_{0,n}=x_0^n,\; \frac{1}{2}<x_0<1$, we get
\begin{align*}
&\sup_B\big|P \big(\big(| X_{n:n}|^{n}\mbox{sign}(X_{n:n}),\dots,| X_{n-k+1:n}|^{n}\mbox{sign}(X_{n-k+1:n})\big)\in B \big)-H_k(B)\big|\\
&\quad\leq \bigg(\sum_{j=1}^{k}\int_{x_0^n}^{r(H)} \left(\frac{nf_n(x)}{w_{2,1}(x)}-1-\log\frac{nf_n(x)}{w_{2,1}(x)} \right) dH_j(x)+H_k(x_0^n)+kH_{k-1}(x_0^n)\\
&\qquad+\sum_{j=1}^{k-1}\int_{x_j>x_0^n,x_k<x_0^n}\log\frac{nf_n(x_j)}{w_{2,1}(x_j)}dH_k(x)
\bigg)^{1/2}+\dfrac{ck}{n}.
\end{align*}
Note that $H_k(x)=O((k/x)^m)$ uniformly in $k$ and $0<x<1$ for every
positive integer $m$. Moreover, since $h$ is bounded on $(x_0,1)$, we
have
\begin{align*}
&\sum_{j=1}^k\int_{x_0^n}^{r(H)} \left(\frac{nf_n(x)}{w_{2,1}(x)}-1-\log
\left(\frac{nf_n(x)}{w_{2,1}(x)} \right) \right)dH_j(x)\\
&\quad= \sum_{j=1}^k\int_{x_0^n}^{1}
\biggl(\frac{f(x^{1/n})}{w_{2,1}(x)}-1-\log \biggl(\frac{f(x^{1/n})}{w_{2,1}(x)} \biggr)
\biggr)h_j(x)dx
\\
&\quad=  \sum_{j=1}^k\int
_{x_0^n}^{1} \bigl(e^{h(x^{1/n})}-1-\log
\bigl(e^{h(x^{1/n})} \bigr) \bigr)h_j(x)dx
\\
&\quad=  \sum_{j=1}^k\int
_{x_0^n}^{1} \bigl(1+h\bigl(x^{1/n}\bigr)+
\dots -1-h\bigl(x^{1/n}\bigr) \bigr)h_j(x)dx
\\
&\quad\leq \biggl(\frac{1}{2}+\frac{L(-\log x_0^{1/n})}{3!}+\cdots \biggr)\sum
_{j=1}^k\int_{x_0^n}^{1}
\xch{\bigl(h\bigl(x^{1/n}\bigr)^2 \bigr)}{(\bigl(h\bigl(x^{1/n}\bigr)^2 \bigr)}\frac{(-\log
x)^{j-1}}{(j-1)!}dx
\\
&\quad\leq \biggl(\frac{1}{2}+\frac{L(-\log x_0^{1/n})}{3!}+\cdots \biggr)\sum
_{j=1}^k\int_0^{1}
\bigl(h\bigl(x^{1/n}\bigr)^2 \bigr)\frac{(-\log x)^{j-1}}{(j-1)!}dx\\
&\quad\leq \biggl(\frac{1}{2}+\frac{L(-\log x_0^{1/n})}{3!}+\cdots \biggr)\sum
_{j=1}^k\int_0^{1}
\frac{L^2(-\log x^{1/n})^{2\delta}}{2}\frac{(-\log
x)^{j-1}}{(j-1)!} dx
\\
&\quad= \biggl(\frac{1}{2}+\frac{L(-\log x_0^{1/n})}{3!}+\cdots \biggr)\frac
{L^2}{2}
\sum_{j=1}^k\frac{n^{-2\delta}}{\varGamma(j)}\int
_0^1(-\log x)^{2\delta+j-1}dx
\\
&\quad= D^*\sum_{j=1}^k\frac{n^{-2\delta}}{\varGamma(j)}\int
_0^\infty e^{-y} y^{2\delta+j-1}dx
\\
& \quad=  D^*n^{-2\delta}\sum_{j=1}^k
\frac{\varGamma(2\delta+j)}{\varGamma(j)},
\end{align*}
where $\varGamma$ is the gamma function. Now, note that (see, e.g., \cite
{erdaelyi1953higher}, p.~47)\vadjust{\eject}
\[
\sum_{j=1}^k\frac{\varGamma(2\delta+j)}{\varGamma(j)}\leq
D'\sum_{j=1}^kj^{2\delta}.
\]
Therefore,
\[
D^*n^{-2\delta}\sum_{j=1}^k
\frac{\varGamma(2\delta+j)}{\varGamma(j)}\leq D^*D'n^{-2\delta} \sum
_{j=1}^k j^{2\delta}\leq D^{**}n^{-2\delta}k^{2\delta+1},
\]
where $D^{**}=D^*D'$.
Hence,
\begin{align*}
&\sup_{B}\big|P \bigl(\bigl(X_{n:n}^n,
\dots,X_{n-k+1:n}^n\bigr)\in B \bigr)-H_k(B)\big|\\
&\quad\le \bigl(D^{**}n^{-2\delta}k^{2\delta+1} \bigr)^{1/2}+ck/n=D
\bigl((k/n)^\delta k^{1/2}+k/n\bigr),
\end{align*}
proving the result.
\end{proof}

\end{document}